\title[Flexibility of entropy of boundary maps]{Flexibility of measure-theoretic entropy of boundary maps associated to Fuchsian groups}
\author{Adam Abrams}
\address{Institute of Mathematics, Polish Academy of Sciences, Warsaw, Poland 00656}
\email{the.adam.abrams@gmail.com}
\author{Svetlana Katok}
\address{Department of Mathematics, The Pennsylvania State University, University Park, PA 16802}
\email{sxk37@psu.edu}
\author{Ilie Ugarcovici}
\address{Department of Mathematical Sciences, DePaul University, Chicago, IL 60614}
\email{iugarcov@depaul.edu}
\dedicatory{In memory of Tolya}
\thanks{The second author was partially supported by NSF grant DMS 1602409. The third author was partially supported by a Simons Foundation Collaboration Grant.}
\keywords{Fuchsian groups, boundary maps, entropy, Teichm\"uller space}
\subjclass[2000]{\!37D40,\,37E10\,(Primary); 28D20,\,20H10\,\rlap{(Secondary)}}
\providecommand\captionperiod{.} 
\crefname{subsection}{section}{sections}
\Crefname{subsection}{Section}{Sections}
\newtheorem{thm}{Theorem}
\newtheorem{cor}[thm]{Corollary}
\newtheorem{lem}[thm]{Lemma}
\newtheorem{prop}[thm]{Proposition}
\theoremstyle{definition}
\newtheoremstyle{iremark}{\topsep}{\topsep}{\upshape}{0pt}{\itshape}{.}{5pt plus 1pt minus 1pt}{\thmname{#1}\thmnumber{ \itshape#2}\thmnote{ (#3)}}
\theoremstyle{iremark}
\newtheorem{remark}[thm]{Remark}
\newcommand\<{\begin{equation}} \renewcommand\>{\end{equation}}
\newcommand{\C}{{\mathbb C}}
\newcommand{\G}{\Gamma} 
\newcommand{\D}{{\mathbb D}}
\newcommand{\R}{{\mathbb R}}
\newcommand{\Sb}{{\mathbb S}}
\newcommand{\Bc}{{\mathcal B}}
\newcommand{\Fc}{{\mathcal F}}
\newcommand{\Gc}{{\mathcal G}}
\newcommand{\Pc}{{\mathcal P}}
\newcommand{\Tc}{{\mathcal T}}
\newcommand\abs[1]{\left\vert#1\right\vert}
\newcommand\setbuilder[3][:]{\left\{\, #2 #1 #3 \,\right\}}
\let\tilde\widetilde
\renewcommand\bar[1]{\,\overline{\!#1\!}\,}
\renewcommand\d{\mathrm{d}}
\providecommand\Id{\mathrm{Id}}
\newcounter{commentcounter} \colorlet{gold}{yellow!67!black}
\renewcommand\P{{\bar P}}
\newcommand\geo{_\mathrm{geo}}
\providecommand\reg{}\renewcommand\reg{^\mathrm{reg}}
\begin{document}
\maketitle
\begin{abstract}
Given a closed, orientable, compact surface $S$ of constant negative curvature and genus $g \ge 2$, we study the measure-theoretic entropy of the Bowen--Series boundary map with respect to its smooth invariant measure. We obtain an explicit formula for the entropy that only depends on the perimeter of the $(8g-4)$-sided fundamental polygon of the surface $S$ and its genus. Using this, we analyze how the entropy changes in the Teichm\"uller space of $S$ and prove the following flexibility result: the measure-theoretic entropy takes all values between $0$ and a maximum that is achieved on the surface that admits a regular $(8g-4)$-sided fundamental polygon. We also compare the measure-theoretic entropy to the topological entropy of these maps and show that the smooth invariant measure is not a measure of maximal entropy.
\end{abstract}

\section{Introduction} \label{sec intro}

Any closed, orientable, compact surface $S$ of genus $g \ge 2$ and constant negative curvature can be modeled as $S = \G\backslash\D$, where $\D = \setbuilder{ z \in \C }{ \abs z < 1 }$ is the unit disk endowed with hyperbolic metric
\begin{equation}
    \label{hypmetric} \frac{2\abs{\d z}}{1-{\abs z}^2}
\end{equation}
and $\G$ is a finitely generated Fuchsian group of the first kind acting freely on $\D$.

Recall that geodesics in this model are half-circles or diameters orthogonal to $\Sb=\partial\D$, the circle at infinity. The geodesic flow $\tilde\varphi^t$ on $\D$ is defined as an $\R$-action on the unit tangent bundle $T^1\D$ that moves a tangent vector along the geodesic defined by this vector with unit speed. The geodesic flow $\tilde\varphi^t$ on $\D$ descends to the geodesic flow $\varphi^t$ on the factor $S=\G\backslash\D$ via the canonical projection
\begin{equation*}
    \label{projection} \pi: T^1\D \to T^1S
\end{equation*}
of the unit tangent bundles.  The orbits of the geodesic flow $\varphi^t$ are oriented geodesics on $S$.

A surface $S$ of genus $g$ admits an $(8g-4)$-sided fundamental polygon $\Fc$ obtained by cutting it with $2g$ closed geodesics that intersect in pairs ($g$ of them go around the ``holes'' and another $g$ go around the ``waists'' of $S$) (see \Cref{fig closed geodesics AF}).
\begin{figure}[htb]
    \includegraphics[width=0.8\textwidth]{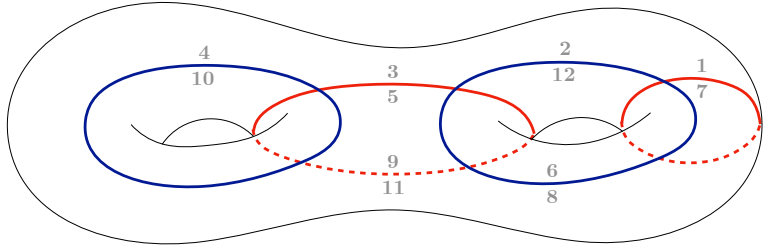}
    \caption{Chain of $2g$ geodesics when $g=2$\captionperiod}
    \label{fig closed geodesics AF}
\end{figure}

The existence of such a fundamental polygon $\Fc$ is an old result attributed \cite{BiS87,W92} to Dehn, Fenchel, Nielsen, and Koebe. Adler and Flatto \cite[Appendix A]{AF91} give a careful proof of existence and properties of $\Fc$.

We label the sides of $\Fc$ in a counterclockwise order by numbers $1 \le i \le 8g-4$ and label the vertices of $\Fc$ by $V_i$ so that side $i$ connects $V_i$ to $V_{i+1}~(\rm{mod}~{8g-4})$ (this gives us a {\em marking} of the polygon). 

We denote by $P_i$ and $Q_{i+1}$ the endpoints of the oriented infinite geodesic that extends side $i$ to the circle at infinity~$\Sb$. (The points $P_i$, $Q_i$ in this paper and~\cite{BS79,KU17,AK19,AKUpre-print} are denoted by $a_i$, $b_{i-1}$, respectively, in~\cite{AF91}.) The order of endpoints on $\Sb$ is the following:
\[ P_1, Q_1, P_2, Q_2, \ldots, P_{8g-4}, Q_{8g-4}.
\label{eq: pq-partition} \]
The identification of the sides of $\Fc$ is given by the side pairing rule
\< \label{sigma} \sigma(i) := \left\{ \begin{array}{ll}
    4g-i \bmod (8g-4) & \text{ if $i$ is odd} \\
    2-i \bmod (8g-4) & \text{ if $i$ is even}.
\end{array} \right. \>
Let $T_i$ denote the M\"obius transformation pairing side $i$ with side $\sigma (i)$. 

\begin{figure}[htb]
	\includegraphics[width=0.5\textwidth]{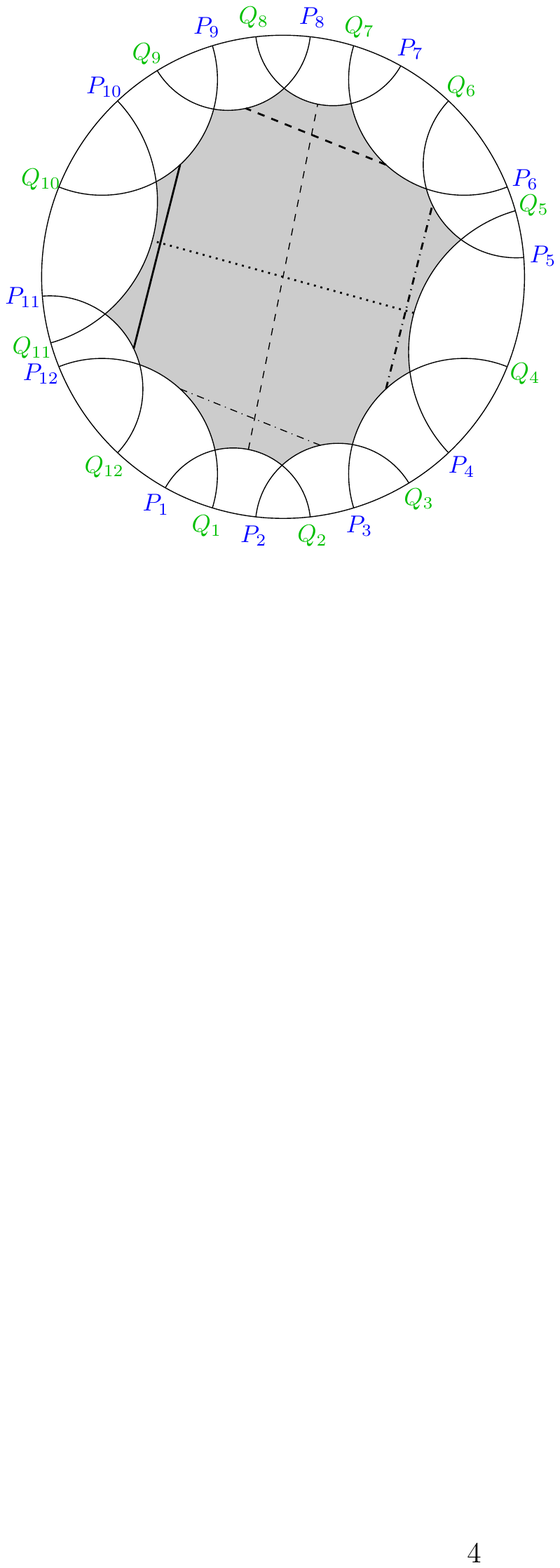} \quad
	\includegraphics[width=0.435\textwidth]{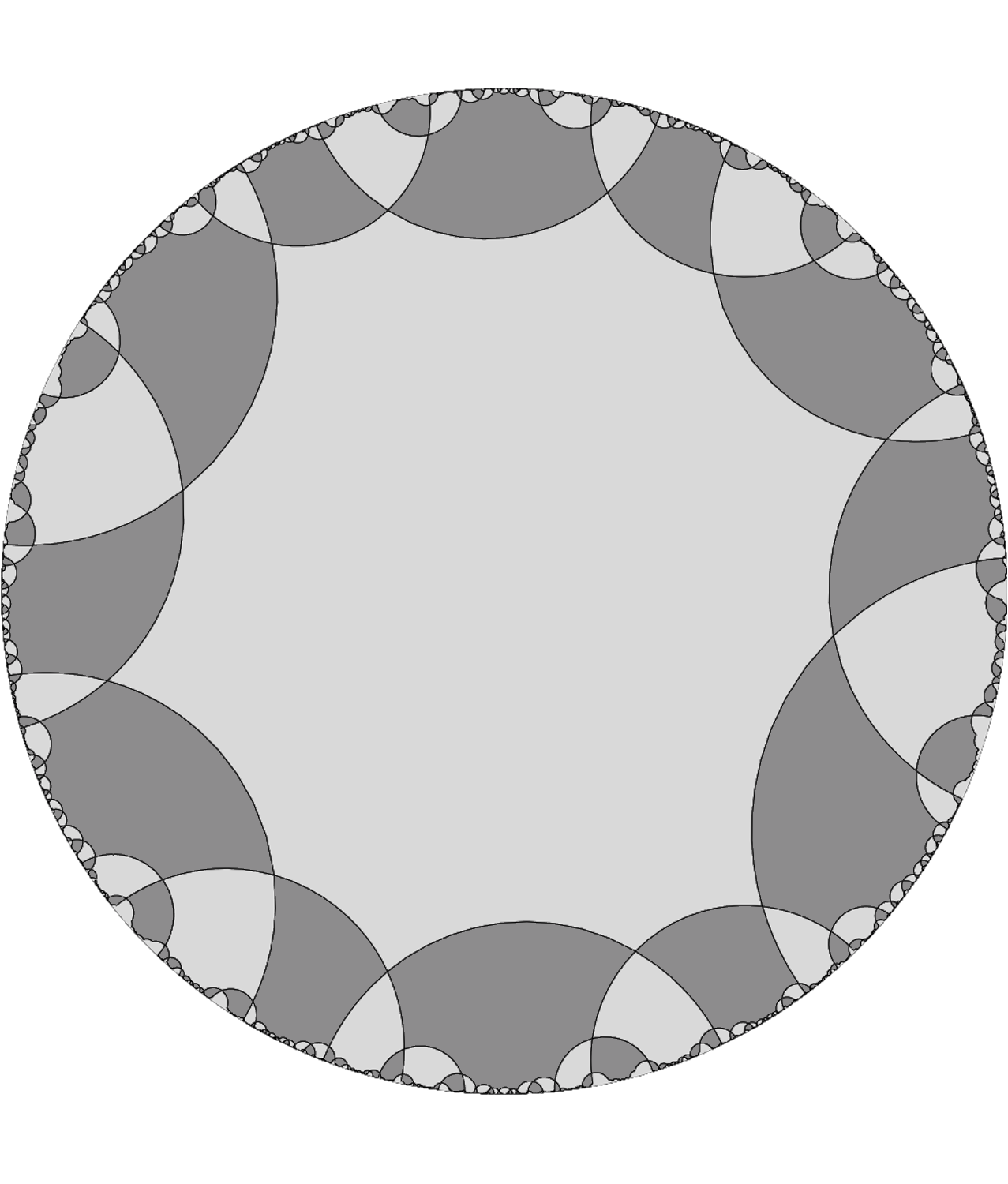}
    \caption{An irregular polygon with side identifications (left) and tessellation (right), genus $2$\captionperiod}
    \label{fig irregular sides}
\end{figure}

Notice that in general the polygon $\Fc$, whose sides are geodesic segments, need not be regular, but the sides $i$ and $\sigma(i)$ must have equal length and the angles at vertices~$i$ and~$\sigma(i)+1$ must add up to $\pi$. The last property implies the ``extension condition,'' which is crucial for our analysis: the extensions of the sides of $\Fc$ do not intersect the interior of the tessellation $\gamma\Fc,\,\gamma\in \G$ (see \Cref{fig irregular sides}). If $\Fc$ is regular (see~\cite[Fig.~1]{AF91}), it is the Ford fundamental domain, i.e., $P_iQ_{i+1}$ is the isometric circle for $T_i$, and $T_i(P_iQ_{i+1})=Q_{\sigma(i)+1}P_{\sigma(i)}$ is the isometric circle for $T_{\sigma(i)}$, so that the inside of the former isometric circle is mapped to the outside of the latter, and all internal angles of $\Fc$ are equal to $\frac{\pi}2$.

For each fundamental polygon $\Fc$ with sides along geodesics $P_iQ_{i+1}$, the \emph{Bowen--Series boundary map} $f_\P : \Sb \to \Sb$ is defined by
\< \label{map definition} f_\P(x) = T_i x \qquad \text{if }x\in[P_i,P_{i+1}). \>

The map admits a unique smooth ergodic invariant measure $\mu_\P$ (see~\cite[Theorem 1.2]{BS79}). Adler and Flatto~\cite{AF91} gave a thorough  analysis of these maps, their two-dimensional natural extensions, and applications to the symbolic coding of the geodesic flow on $\G\backslash\D$.
They also describe the measure $\mu_\P$ as a two-step projection of the invariant Liouville measure for the geodesic flow.

\smallskip
We can now state our first main result:
\begin{thm} \label{thm main formula}
	The entropy of the boundary map with respect to its smooth invariant measure is given by
	\< \label{main formula}
    	h_{\mu_\P}(f_\P) 
    	= \frac{\pi^2(4g-4)}{\text{\small$\mathrm{Perimeter}$}(\Fc)}
    	= \pi \cdot  \frac{\mathrm{Area}(\Fc)}{\text{\small$\mathrm{Perimeter}$}(\Fc)}.
    \> 
\end{thm}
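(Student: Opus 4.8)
The plan is to route the computation through the geodesic flow and reduce it to a purely integral-geometric quantity. First note that the second equality in \eqref{main formula} is immediate from Gauss--Bonnet: since $S$ has constant curvature $-1$ and Euler characteristic $2-2g$, we have $\mathrm{Area}(\Fc)=\mathrm{Area}(S)=\pi(4g-4)$, which turns $\pi\cdot\mathrm{Area}(\Fc)/\mathrm{Perimeter}(\Fc)$ into $\pi^2(4g-4)/\mathrm{Perimeter}(\Fc)$. Thus the entire content is the first equality.

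For that, I would invoke the Adler--Flatto analysis \cite{AF91} of the two-dimensional natural extension $F_\P$ of $f_\P$ acting on pairs of endpoints of geodesics, together with their realization of $\mu_\P$ as a two-step projection of the Liouville measure. Geometrically this says that $F_\P$ is measurably isomorphic to the first-return map of the geodesic flow $\varphi^t$ on $T^1S$ to the cross-section $C$ of unit tangent vectors based along the image of $\partial\Fc$ in $S$, the return time $\tau$ being the length of the geodesic arc cut off inside one tile $\gamma\Fc$. Since passing to a natural extension preserves measure-theoretic entropy, $h_{\mu_\P}(f_\P)=h(F_\P)$, and since the geodesic flow of a curvature $-1$ surface satisfies $h_{\mathrm{Liouville}}(\varphi^1)=1$ (the unique positive Lyapunov exponent equals $1$, by Pesin's formula), Abramov's formula yields
\[
h_{\mu_\P}(f_\P)=h(F_\P)=h_{\mathrm{Liouville}}(\varphi^1)\cdot\!\int_C\tau\,d\nu=\int_C\tau\,d\nu,
\]
where $\nu$ is the normalized invariant probability on $C$ induced by Liouville measure. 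In other words, the entropy equals the \emph{mean return time} of the flow to $C$.

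The last step is to evaluate this mean return time by integral geometry. In flow-box coordinates the Liouville measure factors as $|\sin\alpha|\,ds\,d\alpha\,dt$, where $s$ is arclength along the cross-section curve, $\alpha$ is the angle the vector makes with that curve, and $t$ is the flow time; hence the induced cross-section measure is the flux measure $d\mu_C=|\sin\alpha|\,ds\,d\alpha$, and Santal\'o's formula gives $\int_C\tau\,d\mu_C=\mathrm{Vol}(T^1S)=2\pi\,\mathrm{Area}(\Fc)$. Counting crossings in both transverse directions over $\partial\Fc$, which projects to $S$ with the sides of $\Fc$ glued in pairs, the total flux is $\mu_C(C)=\bigl(\int_0^{2\pi}|\sin\alpha|\,d\alpha\bigr)\cdot\tfrac12\mathrm{Perimeter}(\Fc)=2\,\mathrm{Perimeter}(\Fc)$. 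Therefore
\[
\int_C\tau\,d\nu=\frac{\int_C\tau\,d\mu_C}{\mu_C(C)}=\frac{2\pi\,\mathrm{Area}(\Fc)}{2\,\mathrm{Perimeter}(\Fc)}=\frac{\pi\,\mathrm{Area}(\Fc)}{\mathrm{Perimeter}(\Fc)},
\]
which is exactly the claimed value and is the classical mean-free-path identity $\pi A/P$ for a planar domain.

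The main obstacle I anticipate is not any single computation but the bookkeeping in the middle step: one must verify, using the extension condition and the side-pairing $\sigma$, that $F_\P$ really is the \emph{first}-return map (each application of a generator $T_i$ corresponds to crossing exactly one edge of the tessellation) and that $\mu_\P$ coincides on the nose with the normalization of the flux measure, so that the constants of proportionality — in particular the factor $\tfrac12$ from gluing sides in pairs and the factor from counting both crossing directions — combine to produce $\mathrm{Perimeter}(\Fc)$ rather than a spurious multiple of it. Once the cross-section is correctly normalized, the entropy is forced to equal the mean free path, and the formula follows.
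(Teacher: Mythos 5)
Your proposal is correct and lands on the right constants, but it routes the computation differently from the paper. The paper splits the work into two quotable ingredients: it takes the identity $h_{\mu_\P}(f_\P)=h_{\nu_\P}(F_\P)=\pi^2(4g-4)/\int_{\Omega_\P}\d\nu$ from \cite[Proposition~10.1]{AK19} (this is where Abramov, Ambrose--Kakutani, $h_{\mathrm{Liouville}}(\varphi^1)=1$, and the volume of $T^1S$ are hidden), and then evaluates $\int_{\Omega_\P}\d\nu$ on the boundary circle: the Adler--Flatto correspondence $\Phi:\Omega\geo\to\Omega_\P$ of \Cref{thm bijection}, being piecewise M\"obius, preserves $\nu$, and Bonahon's geodesic-current lemma (\Cref{lem length from integral}) identifies $\nu(\Omega\geo)=\sum_i\nu(\Psi^+(\text{side }i))$ with $\mathrm{Perimeter}(\Fc)$. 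You instead stay upstairs in $T^1S$ and compute the mean return time as a mean free path via Santal\'o's formula; your flux count $\mu_C(C)=2\,\mathrm{Perimeter}(\Fc)$ is Bonahon's lemma in disguise (both amount to writing the cross-section measure as a multiple of $\sin\theta\,\d\theta\,\d x$ and integrating over the sides), and since you only use the ratio $2\pi\,\mathrm{Area}(\Fc)/\bigl(2\,\mathrm{Perimeter}(\Fc)\bigr)$, the normalization constants you worry about genuinely cancel. Your version buys self-containedness and the transparent slogan ``entropy equals the mean free path $\pi A/P$''; the paper's version isolates the delicate normalization inside a single corrected citation.

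The one step where your sketch really does need the paper's machinery is the one you flagged: $F_\P$ is \emph{not} literally the first-return map to the cross-section of vectors along $\partial\Fc$. That role is played by the curvilinear map $F\geo$, defined by which side the geodesic exits through, whereas $F_\P$ is defined by which arc $[P_i,P_{i+1})$ contains the forward endpoint $w$; the two disagree exactly on the bulges $\Bc_i$. The fix is \Cref{thm bijection}: $\Phi$ is the identity off the bulges and the M\"obius map $T_{\sigma(i)-1}T_i$ on $\Bc_i$, hence $\nu$-preserving, and (by \cite[Theorem~5.1]{AF91}) it intertwines $F\geo$ with $F_\P$, which is what lets you transport both the entropy and the smooth invariant measure from the genuine first-return map to $F_\P$. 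With that conjugacy inserted, your argument is complete and equivalent to the paper's.
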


\providecommand\Greg{{\G_{\!\mathrm{reg}}}}
\providecommand\Freg{{\Fc_{\!\mathrm{reg}}}}
Let $S=\G\backslash \D$ be any compact surface of genus $g\ge 2$ and $S_0=\Greg\backslash \D$ be a special genus $g$ surface that admits a regular $(8g-4)$-sided fundamental region $\Freg$.
By the Fenchel--Nielsen Theorem there exists an ori\-en\-ta\-tion-pre\-serv\-ing hom\-e\-o\-mor\-phism~$h$ from $\bar\D$ onto $\bar\D$ such that $\G=h\circ \Greg \circ h^{-1}$ and the sides of the fundamental polygon $\Fc$ for $\G$ belong to geodesics $P'_iQ'_{i+1}$, where $P'_i=h(P_i),Q'_{i+1}=h(Q_{i+1})$ and $P_iQ_{i+1}$ are the extensions of the sides of $\Freg$.
The map $h\big|_{\Sb}$ is a homeomorphism of $\Sb$ preserving the order of the points $\{P_i\}\cup\{Q_i\}$.

The Teichm\"uller space~$\Tc(S)$ of a surface $S$ can be thought of as any of the following:
\begin{enumerate}[\quad(i)]
	\item \label{item Teich Rss} the space of Riemann surface structures on $S$ modulo conformal maps isotopic to the identity~\cite[Section~1]{FM13};
	\item \label{item Teich mFg} the space of marked Fuchsian groups $\G$ such that $\pi_1 (S) \stackrel{\sim}{\to} \G$ and $S$ is ori\-en\-ta\-tion-pre\-serv\-ing homeomorphic to $\G\backslash\D$~\cite[Definition~2.1.1]{Funar};
	\item \label{item Teich mhp} the space of all marked canonical hyperbolic $(8g-4)$-gons in the unit disk~$\D$ such that side~$i$ and side~$\sigma(i)$ have equal length and the internal angles at vertices~$i$ and~$\sigma(i)\!+\!1$ sum to~$\pi$, up to an isometry of $\D$. (The topology on the space of polygons is as follows: $\Pc_k \to \Pc$ if and only if the lengths of all sides converge and the measures of all angles converge.)
\end{enumerate}
The space~$\Tc(S)$ is ho\-me\-o\-morph\-ic to~$\R^{6g-6}$. A standard way to parametrize $\Tc(S)$ is through Fenchel--Nielsen coordinates (see classical manuscript recently published in~\cite{FN}). The surface $S$ can be decomposed along $3g-3$ simple closed curves into $2g-2$ pairs of pants (shown for $g=3$ in the bottom of Figure~\ref{fig closed geodesics} on page~\pageref{fig closed geodesics}). For any $S'\in\Tc(S)$, these curves are canonically represented by geodesics, whose lengths determine each pair of pants up to isometry. To recover $S'$ we need in addition twist parameters when gluing pants together. Thus altogether $\Tc(S)$ is parametrized by~$\R_+^{3g-3}\times \R^{3g-3} $ (the first group of parameters are called the {\em lengths} and the second the {\em twists}), and $\dim \Tc(S)=6g-6$.

The construction (\ref{item Teich mhp}) of $\Tc(S)$ by varying ``marked'' fundamental  polygons is less common than the others. Following the earlier work~\cite{ZC80,T97}, Schmutz Schaller~\cite{S99} considers canonical $4g$-gons, but the canonical $(8g-4)$-gons may be considered as well. The following is a heuristic argument for the derivation of the dimension of $\Tc(S)$ using the $(8g-4)$-gon: the lengths of the identified pairs of sides are given by $4g-2$ real parameters; $2g-1$ real parameters represent the angles since four angles at each vertex are determined by one real parameter. The dimension of the space of isometries of $\D$ is $3$, so we remain with $(4g-2)+(2g-1)-3=6g-6$ parameters.

\medskip
A few years ago, Anatole Katok suggested a new area of research---or, at the very least, a new viewpoint---called the ``flexibility program,'' which can be broadly formulated as follows: under properly understood general restrictions, within a fixed class of smooth dynamical systems some dynamical invariants take arbitrary values. Taking this point of view, it is natural to ask how the     mea\-sure-theo\-re\-tic entropy $h_{\mu_\P}(f_\P)$ changes in $\Tc(S)$. Our second main result addresses this question:
\begin{thm}[Maximum and flexibility of entropy] \label{thm max and flexibility}~
    \begin{enumerate}[(i)]	
    	\item \label{item max} Among all surfaces in $\Tc(S)$, the maximum value  of the entropy $h_{\mu_\P}(f_\P)$ is achieved on the surface for which $\Fc$ is regular and is equal to 
    	\<\label{max H}
    	H(g) := h_{\mu_\P\reg}(f_\P\reg)=\frac{\pi^2(4g - 4)}{(8g-4)\cosh^{-1}(1+2\cos\tfrac\pi{4g-2})}.
    	\>
    	\item \label{item flexibility} For any value $h \in (0,H(g)]$ there exists $\Fc \in \Tc(S)$ such that $h_{\mu_\P}(f_\P) = h$.
    \end{enumerate}
\end{thm}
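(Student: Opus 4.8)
The plan is to convert both claims into statements about the \emph{perimeter} of $\Fc$ and then combine a hyperbolic isoperimetric inequality with a connectedness argument. The starting point is \Cref{thm main formula}, which gives $h_{\mu_\P}(f_\P) = \pi^2(4g-4)/\mathrm{Perimeter}(\Fc)$. By the Gauss--Bonnet theorem the area of any canonical $(8g-4)$-gon equals $(8g-6)\pi - \sum_i \theta_i$, and the constraint that the angles at paired vertices sum to $\pi$ forces $\sum_i \theta_i = (4g-2)\pi$; hence $\mathrm{Area}(\Fc) = \pi(4g-4)$ is constant throughout $\Tc(S)$. Consequently the numerator in \eqref{main formula} is a fixed constant and $h_{\mu_\P}(f_\P)$ is a strictly decreasing function of $\mathrm{Perimeter}(\Fc)$. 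Thus maximizing the entropy is equivalent to minimizing the perimeter, and realizing a prescribed entropy value $h$ is equivalent to realizing the perimeter value $\pi^2(4g-4)/h$.

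For part~(i) I would appeal to the isoperimetric inequality for geodesic polygons: among all $(8g-4)$-gons of the fixed area $\pi(4g-4)$, the regular one has the least perimeter. Since every polygon in $\Tc(S)$ is a convex $(8g-4)$-gon of this area and $\Freg$ itself lies in $\Tc(S)$, it follows that $\mathrm{Perimeter}(\Fc) \ge \mathrm{Perimeter}(\Freg)$ for all $\Fc \in \Tc(S)$, with equality exactly at $\Freg$; equivalently the entropy is maximized at $\Freg$. To obtain the explicit value $H(g)$ I would compute $\mathrm{Perimeter}(\Freg)$. The regular polygon has all angles equal to $\tfrac\pi2$, so dissecting it into $2(8g-4)$ congruent right triangles with vertex angles $\tfrac{\pi}{8g-4}$ and $\tfrac\pi4$ and applying the right-triangle relation $\cos\tfrac{\pi}{8g-4} = \cosh\tfrac s2\,\sin\tfrac\pi4$ yields the common side length $s = \cosh^{-1}\!\bigl(1 + 2\cos\tfrac{\pi}{4g-2}\bigr)$. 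Hence $\mathrm{Perimeter}(\Freg) = (8g-4)\,\cosh^{-1}\!\bigl(1+2\cos\tfrac\pi{4g-2}\bigr)$, and substituting into \eqref{main formula} produces \eqref{max H}.

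For flexibility (part~(ii)) I would argue by continuity. The perimeter is a continuous function on $\Tc(S)$, which is connected (homeomorphic to $\R^{6g-6}$), and by part~(i) its minimum value is $\mathrm{Perimeter}(\Freg)$. It then suffices to produce a continuous path $t \mapsto \Fc_t$ in $\Tc(S)$ with $\Fc_0 = \Freg$ along which $\mathrm{Perimeter}(\Fc_t) \to \infty$: the intermediate value theorem forces the perimeter to realize every value in $[\mathrm{Perimeter}(\Freg), \infty)$, and hence $h_{\mu_\P}(f_\P) = \pi^2(4g-4)/\mathrm{Perimeter}(\Fc)$ to realize every value in $(0, H(g)]$. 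Such a path is produced by pinching, i.e.\ by letting one Fenchel--Nielsen length parameter tend to $0$. The $2g$ cutting geodesics that define $\Fc$ fill $S$ (their complement is a single disk), so they cross every short-geodesic collar; by the collar lemma the width of the collar around the pinched curve tends to infinity, forcing the length of the crossing cutting geodesic---and therefore of the corresponding pair of sides of $\Fc_t$---to tend to infinity. The endpoint value $h = 0$ is not attained, consistent with the half-open interval, since it corresponds to infinite perimeter.

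I expect the main obstacle to be the isoperimetric step underlying part~(i): one must know that $\Freg$ is the \emph{global} minimizer of the perimeter among all $(8g-4)$-gons of area $\pi(4g-4)$, not merely a stationary point of the perimeter on $\Tc(S)$. I would secure this either by invoking the classical isoperimetric inequality for geodesic polygons in the hyperbolic plane (regular minimizes perimeter for fixed area and fixed number of sides), or, in a self-contained way, by a variational argument showing that any departure from equal side lengths and equal angles can be symmetrized so as to strictly decrease the perimeter while holding the area fixed, the unique symmetric configuration being $\Freg$. The remaining technical point---unboundedness of the perimeter along the pinching family in part~(ii)---is comparatively routine once the collar lemma and the filling property of the cutting geodesics are in hand.
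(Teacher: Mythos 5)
Your proposal is correct and follows essentially the same route as the paper. For part (i) the two arguments coincide: both reduce the problem to minimizing the perimeter at fixed area $2\pi(2g-2)$ (you obtain this constant from the polygon Gauss--Bonnet formula plus the angle-pairing condition, the paper from Gauss--Bonnet on the surface), both invoke the isoareal inequality for hyperbolic $n$-gons (the paper cites Ku--Ku--Zhang, Theorem~1.2(a), rather than sketching a symmetrization argument), and both compute the side length of the regular right-angled $(8g-4)$-gon by the same right-triangle dissection, arriving at $\cosh^{-1}(1+2\cos\tfrac{\pi}{4g-2})$. For part (ii) the skeleton is again identical---continuity of the perimeter on $\Tc(S)$ plus the intermediate value theorem---but you differ in how you drive the perimeter to infinity: you pinch a pants curve and combine the collar lemma with the fact that the $2g$ chain geodesics fill $S$, whereas the paper chooses the pants decomposition so that one Fenchel--Nielsen length coordinate \emph{is} the length of a side of $\Fc$ and simply lets that coordinate tend to infinity. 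The paper's mechanism is more economical (no collar lemma or filling argument needed, since the relevant side length is manifestly unbounded), while yours is somewhat more robust in that pinching any pants curve would do; both are valid. The one step you assert without justification is the continuity of $\mathrm{Perimeter}(\Fc)$ on $\Tc(S)$, which the paper does argue (via the Fenchel--Nielsen theorem: convergence in $\Tc(S)$ forces convergence of the endpoints $P_i$, $Q_{i+1}$ and hence of the vertices of $\Fc$); this deserves a sentence in a final write-up but is not a substantive gap.
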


\medskip
The paper is organized as follows. In \Cref{sec formula proof} we prove \Cref{thm main formula}. The natural extension $F_\P$ of $f_\P$ and the ``geometric map'' $F\geo$ from~\cite{AK19} are used in the proof. In \Cref{sec flexibility proof} we prove \Cref{thm max and flexibility} by invoking the Isoareal Inequality and using Fenchel--Nielsen coordinates in the Teichm\"uller space related to a fundamental ($8g-4$)-gon. In \Cref{sec h top} we compare the topological entropy of the boundary map $f_{\bar P}$ to the measure-theoretic entropy and show that the smooth invariant measure $\mu_\P$ is not a measure of maximal entropy. In~\Cref{sec Maskit} we provide some computational tools for genus $2$.

\subsection*{Acknowledgements} The second author was partially supported by NSF grant DMS 1602409. The third author was partially supported by a Simons Foundation Collaboration Grant. The authors would like to thank the Institute of Mathematics of the Polish Academy of Sciences (IMPAN) for its hospitality. We would also like to thank the anonymous referee for careful reading and, in particular, for pointing out the correct constant relating $\d m$ and $\d\omega$ in \Cref{sec formula proof}.

\section{Proof of Theorem~\ref{thm main formula}} \label{sec formula proof}

The space of oriented geodesics on $\D$ is modeled as $\Sb \times \Sb \setminus \Delta$, where $\Delta$ is the diagonal $\setbuilder{(w,w)}{w\in\Sb}$.
The smooth measure
\[ \d\nu = \dfrac{\abs{\d u}\abs{\d w}}{\abs{u-w}^2} \]
on $\Sb\times\Sb\setminus\Delta$ was most probably first considered by E.~Hopf~\cite{H36} as he introduced the measure $\d m=\d\nu \d s$ on $T^1(\D)$ to study ergodic properties of the geodesic flow. The measure~$\d\nu$ was later used by Sullivan~\cite{Sul79}, Bonahon~\cite{B88}, Adler--Flatto~\cite{AF91}, and the current authors~\cite{KU17,AK19}. The measure $\d m$ is often more convenient for studying the geodesic flow than the Liouville volume 
\[ \d\omega = \frac{4\,\d x\,\d y\,\d\theta}{(1-x^2-y^2)^2}, \]
which comes from the hyperbolic measure on $\D$. Both measures~$\d\nu$ and~$\d m$ are preserved by M\"obius transformations, and $\d\omega = \tfrac12\d m$ (see~\cite[Appendix~A2]{B88}).\footnote{\,The constant relating $d\omega$ and $\d m$ was given incorrectly as $1/4$ in~\cite[page 250]{AF91}. Following that, $1/4$ was used in~\cite[Proposition~10.1]{AK19}.}

Adler and Flatto~\cite{AF91} introduced the ``rectilinear map'' defined by
\< F_\P(u,w) = (T_iu,T_iw) \qquad\text{if }w \in [P_i,P_{i+1}) \>
and showed the existence of an invariant domain $\Omega_\P \subset \Sb\times \Sb$ such that $F_\P$ re\-stric\-t\-ed to~$\Omega_\P$ is a two-dimensional geometric realization of the natural extension map of~$f_\P$. (In~\cite{KU17}, the authors showed that $\Omega_\P$ is also the global attractor of $F_\P:\Sb\times\Sb\setminus\Delta \to \Sb\times\Sb\setminus\Delta$.) The set $\Omega_\P$ is bounded away from the diagonal $\Delta$ and has a finite rectangular structure.
Thus $F_\P$ preserves the smooth probability measure
\[ \label{dnuP} \d\nu_\P := \frac{\d\nu}{\int_{\Omega_\P}\d\nu}. \]
The boundary map $f_\P$ is a factor of $F_\P$ (projecting on the second coordinate), so one can obtain its smooth invariant probability measure $\mu_\P$ as a projection.

\medskip
The geodesic flow on $S$ can be realized as a special flow over a cross-section that is parametrized by $\Omega_\P$, and the first return map to this cross-section acts exactly as $F_\P : \Omega_\P \to \Omega_\P$.
Using this realization along with Abramov's formula and the Ambrose--Kakutani theorem, we have from~\cite[Proposition~10.1]{AK19} (with a corrected constant) that 
\[ h_{\mu_\P}(f_\P) = h_{\nu_\P}(F_\P) = \frac{\pi^2(4g-4)}{\int_{\Omega_\P}\d\nu}, \]
and since $\mathrm{Area}(\Fc)=2\pi(2g-2)$ by the Gauss--Bonnet formula, we have 
\< \label{h from AK} h_{\mu_\P}(f_\P) = \pi \cdot \frac{\mathrm{Area}(\Fc)}{\int_{\Omega_\P}\d\nu}. \>

To prove \Cref{thm main formula}, it remains only to show that $\int_{\Omega_\P}\d\nu$ is equal to the (hyperbolic) perimeter of $\Fc$.
For that, we use another map, also introduced by Adler--Flatto in~\cite{AF91}, called the ``curvilinear map'' (or ``geometric map'' in~\cite{AK19}). Denoting by $uw$ the geodesic from $u$ to $w$, the map is defined on the set
\[ \Omega\geo:=\setbuilder{ (u,w) }{ uw \text{ intersects }\Fc} \;\;\subset\;\; \Sb\times\Sb\setminus\Delta \]
and is given by
\[ F\geo(u,w) = (T_iu,T_iw) \quad\text{if $uw$ exits $\Fc$ through side $i$.} \]
There is a key correspondence between $\Omega\geo$ and $\Omega_\P$ (see \Cref{fig bulges and corners}):
\begin{prop}[{\cite[Theorem~5.1]{AF91}}] \label{thm bijection}
	The map $\Phi : \Omega\geo \to \Omega_\P$ given by
    \[ \Phi = \left\{ \begin{array}{ll}
        \Id & \text{on } \Omega\geo \cap \Omega_\P \\
        T_{\sigma(i)-1}T_i & \text{on } \Bc_i,
    \end{array} \right. \]
    where $\Bc_i = \setbuilder{(u,w) \in \Omega\geo \setminus \Omega_\P}{w \in [P_i,P_{i+1}]}$, is bijective.
\end{prop}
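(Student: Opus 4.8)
The plan is to prove bijectivity by decomposing both domains into their common part together with a finite collection of ``extra'' pieces, and then checking that the Möbius maps $T_{\sigma(i)-1}T_i$ carry the extra pieces of $\Omega\geo$ onto the extra pieces of $\Omega_\P$ in a one-to-one, onto fashion. Concretely, I would first write $\Omega\geo = (\Omega\geo \cap \Omega_\P) \sqcup \bigsqcup_i \Bc_i$ and, correspondingly, $\Omega_\P = (\Omega\geo \cap \Omega_\P) \sqcup \bigsqcup_i \Cc_i$, where the ``corner'' regions $\Cc_i := \Omega_\P \setminus \Omega\geo$ (indexed appropriately) are the counterparts of the ``bulges'' $\Bc_i$ (see \Cref{fig bulges and corners}). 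Since $\Phi$ is the identity on $\Omega\geo \cap \Omega_\P$, bijectivity of $\Phi$ reduces to showing that for each $i$ the map $T_{\sigma(i)-1}T_i$ restricts to a bijection $\Bc_i \to \Cc_{j(i)}$ for a suitable index $j(i)$, with the images $\Cc_{j(i)}$ exhausting every corner exactly once and landing entirely outside the common part.

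To describe the pieces precisely, I would work in the coordinates given by the cyclically ordered endpoints $P_1, Q_1, \ldots, P_{8g-4}, Q_{8g-4}$. The region $\Omega_\P$ has the rectangular structure established in \cite{AF91,KU17}: over each horizontal strip $w \in [P_i, P_{i+1})$ the $u$-fiber is a fixed arc with endpoints among the partition points. By contrast, $\Omega\geo$ is curvilinear: a pair $(u,w)$ lies in $\Omega\geo$ exactly when the geodesic $uw$ meets $\Fc$, a condition cut out by the side geodesics $P_jQ_{j+1}$. Comparing the two descriptions strip by strip isolates the bulge $\Bc_i$ as the set of $(u,w)$ with $w \in [P_i, P_{i+1}]$ whose geodesic crosses $\Fc$ but whose $u$-coordinate falls outside the rectangular $u$-fiber of $\Omega_\P$.

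The heart of the argument is verifying that $T_{\sigma(i)-1}T_i$ sends $\Bc_i$ onto the correct corner. For this I would use the endpoint relations forced by the side-pairing rule \eqref{sigma} and the extension condition, in particular the identities $T_i(P_i) = Q_{\sigma(i)+1}$ and $T_i(Q_{i+1}) = P_{\sigma(i)}$ together with the action of $T_i$ on the neighboring partition points. Applying $T_i$ realizes the exit through side $i$ and moves the bulge to a neighborhood of side $\sigma(i)$; the subsequent $T_{\sigma(i)-1}$ then folds this neighborhood onto a rectangular fiber of $\Omega_\P$, converting the curvilinear bulge boundary into the straight corner boundary. Tracking the images of the defining endpoints should show that the image is precisely a corner $\Cc_{j(i)}$, and that distinct bulges map to distinct corners covering $\Omega_\P \setminus \Omega\geo$ without overlap.

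The main obstacle I anticipate is this last bookkeeping step: correctly matching each bulge to its corner and confirming that the boundary curves align under $T_{\sigma(i)-1}T_i$. This requires a careful case analysis of the endpoint images---distinguishing the behavior at $P_i$ versus $P_{i+1}$ and odd versus even $i$ in \eqref{sigma}---and invoking the extension condition to guarantee that the bulges and corners are genuinely disjoint from the common part and from one another. Once the correspondence of pieces is established, bijectivity of $\Phi$ follows formally; and since each $T_{\sigma(i)-1}T_i$ is a Möbius transformation while $\d\nu$ is Möbius-invariant, one obtains as a bonus that $\int_{\Omega\geo}\d\nu = \int_{\Omega_\P}\d\nu$, which is exactly the identity needed to complete the computation of $\int_{\Omega_\P}\d\nu$ in the remainder of the section.
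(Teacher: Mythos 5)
Your plan is the right one, and it matches the structure of the argument this proposition actually rests on: the paper itself does not prove the statement but quotes it from Adler--Flatto \cite[Theorem~5.1]{AF91}, where the proof proceeds exactly as you describe --- split $\Omega\geo$ and $\Omega_\P$ into their common part plus finitely many ``bulges'' and ``corners,'' and match the extra pieces via the maps $T_{\sigma(i)-1}T_i$. Your observations that injectivity on each piece is automatic (each piece is moved by a single M\"obius transformation acting diagonally), that the whole issue is the exact matching of pieces, and that M\"obius invariance of $\d\nu$ then yields $\int_{\Omega\geo}\d\nu=\int_{\Omega_\P}\d\nu$ are all correct; the last point is indeed the only use the paper makes of the proposition. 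The endpoint identities $T_i(P_i)=Q_{\sigma(i)+1}$ and $T_i(Q_{i+1})=P_{\sigma(i)}$ you invoke are also the right starting data.

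That said, what you have written is a plan rather than a proof: the step you yourself flag as ``the main obstacle'' is the entire content of the proposition, and it is not carried out. To close it you need two explicit ingredients that the proposal only alludes to. First, the precise rectangular structure of $\Omega_\P$ --- the $u$-fibre over each strip $w\in[P_i,P_{i+1}]$ is a specific arc with endpoints among the $P_j$, $Q_j$ --- so that each bulge and each corner becomes a curvilinear ``triangle'' with identifiable vertices; note also that $\Omega_\P\setminus\Omega\geo$ is the \emph{union} of the corners, not a single $\Cc_i$, so the indexing you postpone must actually be produced. Second, the curved side of a bulge is an arc of the set $\setbuilder{(u,w)}{uw\ni V}$ of geodesics through a vertex $V$ of $\Fc$, and a M\"obius transformation $T$ carries this set to the geodesics through $TV$; hence matching the curved boundaries amounts to checking that $T_{\sigma(i)-1}T_i$ sends the relevant vertex of $\Fc$ to another vertex, which is precisely where the vertex-cycle relations forced by the extension condition (angles at $i$ and $\sigma(i)+1$ summing to $\pi$) enter, and is the reason the composite $T_{\sigma(i)-1}T_i$ rather than $T_i$ alone appears. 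Since horizontal and vertical lines are preserved by diagonal M\"obius actions, the whole verification reduces to finitely many point identities among images of the $P_j$, $Q_j$, and $V_j$; until those are computed (with the odd/even cases of \eqref{sigma} distinguished, as you note), the claim that the images exhaust every corner exactly once is asserted, not proved. If you want a self-contained argument, carry out that finite computation; otherwise the honest route is the one the paper takes, namely citing \cite[Theorem~5.1]{AF91}.
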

\begin{figure}[htb]
	\includegraphics[width=0.46\textwidth]{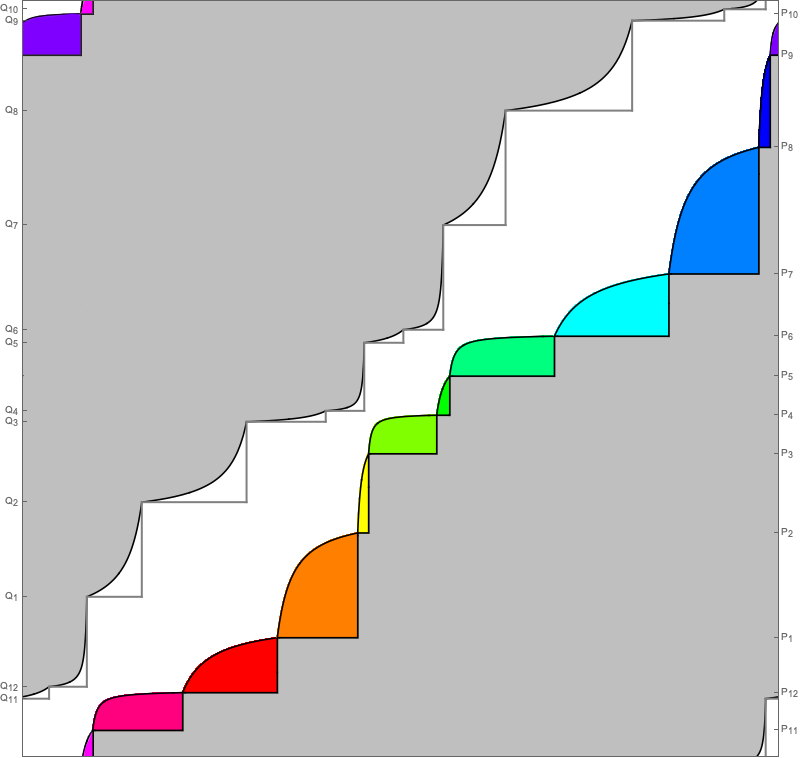}
	\raisebox{2.5cm}{$\stackrel{\Phi}{\to}$}
	\includegraphics[width=0.46\textwidth]{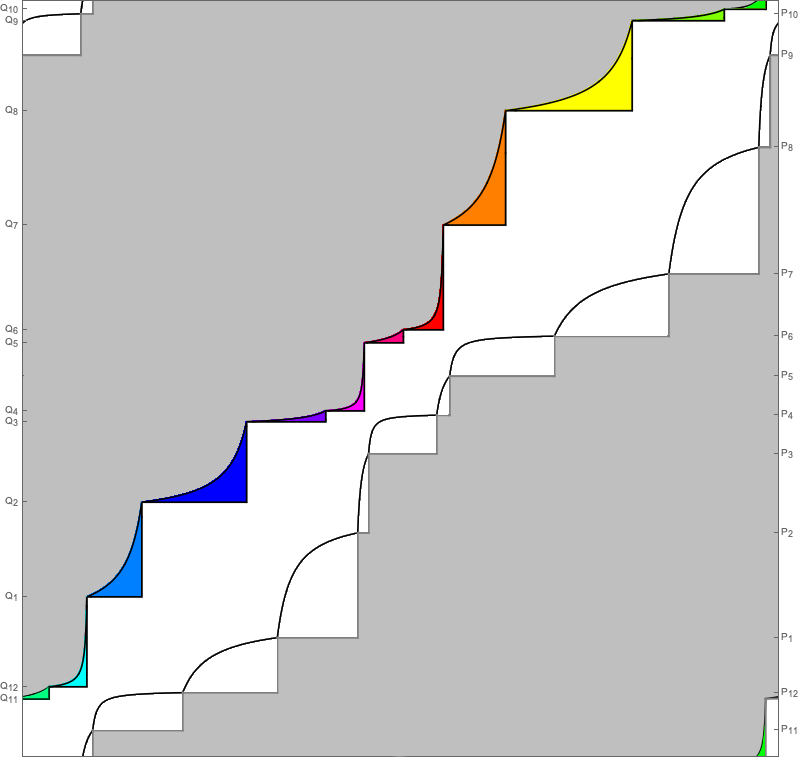} 
    \caption{Bulges $\Bc_i$ of $\Omega\geo$ (left) are mapped to corners of $\Omega_\P$ (right)\captionperiod}
    \label{fig bulges and corners}
\end{figure}
	
Since $\Phi$ acts by fractional linear transformations, which preserve the measure $\nu$, we have that
\<
    \label{PG} \int_{\Omega_\P}\d\nu = \int_{\Omega\geo}\d\nu.
\>

Having proved \eqref{PG}, we now want to show that $\int_{\Omega\geo}\d\nu$ is equal to the (hyperbolic) perimeter of~$\Fc$.
	
\begin{lem}[{(Bonahon)}] \label{lem length from integral}
	For any oriented geodesic segment $s$ on $\D$, 
	\[ \int_{\Psi^+\!(s)}\d\nu = \mathrm{length}(s), \]
	where $\Psi^+\!(s)$ is the set of oriented geodesics intersecting $s$ with the oriented angle at the intersection between $0$ and $\pi$.
\end{lem}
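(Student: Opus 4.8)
The plan is to exploit that both sides of the claimed identity are invariant under the isometry group of $\D$: the measure $\nu$ is M\"obius-invariant (as already noted), hyperbolic length is an isometry invariant, and the assignment $s \mapsto \Psi^+\!(s)$ is equivariant. Since every oriented geodesic segment is carried by some isometry onto a segment of one fixed geodesic $g_0$, it suffices to prove the formula when $s$ lies along $g_0$. Passing to the upper half-plane model $\mathbb{H}$ by a Cayley transform (permissible, since $\nu$ and the geodesic structure are preserved), I would take $g_0$ to be the positive imaginary axis, parametrized by hyperbolic arc length $t = \log y$.

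Next I would set up explicit coordinates on the crossing geodesics. A geodesic meets $g_0$ transversally exactly when its boundary endpoints lie on opposite sides of $0$, so I write them as $u=-a$, $w=b$ with $a,b>0$. A short computation with the semicircle through $-a$ and $b$ gives the crossing height $y=\sqrt{ab}$, hence the arc-length position $t=\tfrac12\log(ab)$, and the oriented crossing angle $\theta\in(0,\pi)$ via $\cos\theta=\frac{b-a}{a+b}$. In these endpoint coordinates $\d\nu=\frac{\d a\,\d b}{(a+b)^2}$. Computing the Jacobian of $(a,b)\mapsto(t,\cos\theta)$ and substituting, I expect to obtain the clean local form
\[
  \d\nu = \tfrac12\,\sin\theta\,\d t\,\d\theta .
\]

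With this density in hand the conclusion is immediate: as $(a,b)$ range over $(0,\infty)^2$ each crossing geodesic is counted once with its angle sweeping $(0,\pi)$, so integrating over the portion of $g_0$ occupied by $s$ and over $\theta$ gives
\[
  \int_{\Psi^+\!(s)}\d\nu
  = \int_{s}\!\int_0^{\pi}\tfrac12\sin\theta \,\d\theta\,\d t
  = \int_{s}\d t
  = \mathrm{length}(s),
\]
since $\int_0^\pi\tfrac12\sin\theta\,\d\theta=1$. The main obstacle is the careful bookkeeping in the middle step: verifying that the endpoint chart $(a,b)\in(0,\infty)^2$ is a bijection onto $\Psi^+\!(s)$ realizing each crossing exactly once with the correct orientation (oriented angle in $(0,\pi)$), and getting the Jacobian—and with it the factor $\tfrac12$—exactly right. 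This $\tfrac12$ is the very normalization appearing in $\d\omega=\tfrac12\,\d m$, so an equivalent route would be to disintegrate $m=\nu\,\d s$ against the Liouville form $\omega=\d A\,\d\theta$ in Fermi coordinates along $g_0$; I would use the direct endpoint computation as the primary argument precisely because it pins down the constant unambiguously.
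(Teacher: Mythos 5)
Your proposal is correct and follows essentially the same route as the paper, which simply cites Bonahon for the change of variables to $(x,\theta)$ coordinates yielding $\d\nu=\tfrac12\sin\theta\,\d\theta\,\d x$; you carry out that computation explicitly (the endpoint chart $u=-a$, $w=b$, the crossing height $\sqrt{ab}$, the relation $\cos\theta=\frac{b-a}{a+b}$, and the Jacobian $\frac{2}{(a+b)^2}$ all check out, giving the factor $\tfrac12$ correctly). The only difference is that you supply the details the paper delegates to \cite[Appendix~A3]{B88}.
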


The proof involves expressing $\d\nu = |\d u||\d w|/|u-w|^2$ in a coordinate system~$(x,\theta)$ based on movement along geodesics, namely,
\[ \d\nu = \tfrac12 \sin(\theta) \,\d\theta\,\d x, \]
where $x$ is the distance along the segment $s$ from the point of intersection of $s$ with $uw$, and $\theta$ is the angle that $uw$ makes with $s$. (This measure is sometimes called ``geodesic current.'')  See~\cite[Appendix~A3]{B88} for details.\footnote{\,Thank you to Alena Erchenko for providing this reference.}

\medskip
Recall that the domain $\Omega\geo$ of the geometric map $F\geo$ consists of all $(u,w)$ for which $uw$ intersects $\Fc$. This can be decomposed as $\Omega\geo = \bigcup_{i=1}^{8g-4} \Gc_i$, where  \[ \Gc_i = \setbuilder{ (u,w) }{ uw\text{ exits $\Fc$ through side }i } = \Psi_+(\text{side }i) \]
(these ``strips'' are shown in~\cite[Figure~3]{AK19}). Thus from \Cref{lem length from integral} we immediately get 
\[ \label{Gperim} \int_{\Omega\geo}\d\nu = \sum_{i=1}^{8g-4} \int_{\Gc_i}\d\nu = \sum_{i=1}^{8g-4} \mathrm{length}(\text{side }i) = \mathrm{Perimeter}(\Fc). \]
Combining this with \eqref{PG}, one can replace $\int_{\Omega_\P}\d\nu$ by the perimeter of $\Fc$ in the denominator of \eqref{h from AK}; this completes the proof of \Cref{thm main formula}.

\begin{remark}
In~\cite{KU17}, the authors introduced and investigated dynamical properties of boundary map $f_{\bar A}$ defined for an arbitrary multi-parameter $\bar A=\{A_1,A_2,\dots,A_{8g-4}\}$ with all $A_i\in (P_i,Q_i)$ satisfying the so-called ``short cycle property''
$f_{\bar A}(T_iA_i)=f_{\bar A}(T_{i-1}A_i)$.
It was proved in \cite{AK19} that $\nu(\Omega_{\bar A})=\nu(\Omega_{\bar P})$ and $F_{\bar A}$ and $F_{\bar P}$ are measure-theoretically isomorphic, which implies that $h_{\nu_{\bar A}}(F_{\bar A})=h_{\nu_\P}(F_\P)$. This allows us to conclude that $\mu_{\bar A}=\mu_{\bar P}$ and to prove the same formula~\eqref{main formula} for the entropy of $f_{\bar A}$:
\begin{equation*} 
    	h_{\mu_{\bar A}}(f_{\bar A}) 
    	= \frac{\pi^2(4g-4)}{\text{\small$\mathrm{Perimeter}$}(\Fc)}
    	= \pi \cdot  \frac{\mathrm{Area}(\Fc)}{\text{\small$\mathrm{Perimeter}$}(\Fc)}=h_{\mu_\P}(f_\P).
\end{equation*}
In other words, the entropy remains unchanged for all boundary maps $f_{\bar A}$ defined using a partition $\bar A$ satisfying the short cycle property.
\end{remark}

\section{Proof of Theorem~\ref{thm max and flexibility}} \label{sec flexibility proof}

To prove that \Cref{thm max and flexibility}(\ref{item max}) follows from \Cref{thm main formula}, we only need to show that for each genus $g$ the perimeter of $\Fc$ in $\Tc(S)$ is minimized on the regular polygon.
\begin{thm}[Isoareal Inequality] \label{thm isoareal}
	Among all hyperbolic polygons with a given area and number of sides, the regular polygon has the smallest perimeter.
\end{thm}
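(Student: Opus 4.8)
The plan is to set this up as a constrained optimization problem and run a Steiner-type symmetrization argument, reducing an arbitrary extremal polygon first to an equilateral one and then to the regular one. By the Gauss--Bonnet formula the area of a hyperbolic $n$-gon with interior angles $\alpha_1,\dots,\alpha_n$ is $(n-2)\pi-\sum_i\alpha_i$, so fixing the area $A_0$ is the same as fixing the angle sum, while the quantity to be minimized is the perimeter $\sum_i s_i$. First I would establish that a minimizer exists and is convex. Existence follows by taking a minimizing sequence, normalizing modulo isometries of $\D$, and extracting a convergent subsequence; a bounded perimeter keeps the side lengths bounded, and the fixed positive area prevents degeneration (colliding vertices or a side shrinking to zero would force the area below $A_0$). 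Convexity of the minimizer follows from the fact that reflecting a reflex ``dent'' across the chord joining its neighbors preserves the incident side lengths while strictly increasing the area; pushing an appropriate vertex inward then restores the area to $A_0$ while strictly decreasing the perimeter, contradicting minimality.

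Given a convex minimizer $\Fc^*$, the first reduction is to show it is \emph{equilateral}. The key is a local move: fix a vertex $B$ together with its two neighbors $A,C$, and let $B$ vary along the curve of points for which the triangle $ABC$ has fixed area. Replacing $B$ by any such point leaves the total area unchanged (the triangle $ABC$ is the only affected region) and alters the perimeter only through the incident sides, i.e.\ through $|AB|+|BC|$. The crucial sub-lemma is that among all hyperbolic triangles with a fixed base $AC$ and fixed area, the isosceles one minimizes $|AB|+|BC|$: by the reflection symmetry of the constant-area locus about the perpendicular bisector of $AC$, the symmetric (isosceles) apex is the unique critical point, and a convexity/second-order check identifies it as the minimum. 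Since $\Fc^*$ admits no area-preserving perimeter-decreasing move, the two sides incident to every vertex must already be equal; applying this at every vertex yields $s_1=\dots=s_n$.

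The second reduction, from equilateral to \emph{regular}, is carried out by comparison with the regular polygon of equal area. Let $\mathcal A(\ell)$ denote the area of the regular $n$-gon with side length $\ell$; it is continuous and strictly increasing, so there is a unique $\ell_0$ with $\mathcal A(\ell_0)=A_0$. The companion extremal fact I would invoke is that among all equilateral $n$-gons with a prescribed common side length $s$, the regular one has the greatest area (the hyperbolic analogue of ``the cyclic polygon maximizes area for given side lengths''). Applied to $\Fc^*$, whose common side length is $s$, this gives $A_0=\mathrm{Area}(\Fc^*)\le\mathcal A(s)$, hence $\ell_0\le s$ by monotonicity, so the regular $n$-gon of area $A_0$ has perimeter $n\ell_0\le ns=\mathrm{Perimeter}(\Fc^*)$. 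Minimality forces equality, whence $\ell_0=s$ and $\Fc^*$ attains the maximal area among equilateral side-$s$ polygons; the equality case of the companion fact then shows $\Fc^*$ is regular.

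I expect the main obstacle to be the two companion extremal lemmas rather than the symmetrization bookkeeping: the isosceles-triangle lemma (fixed base and area force the isosceles apex to minimize the side-sum) and the statement that the regular polygon maximizes area among equilateral $n$-gons of fixed side length. Both are most cleanly handled by exploiting the reflection symmetry of the relevant configuration space together with a convexity or second-variation argument to rule out the symmetric critical point being a saddle or a maximum; verifying the sign of the second variation via hyperbolic trigonometric identities is the one genuinely computational ingredient. The existence-and-convexity step also requires some care to exclude degenerate limits of the minimizing sequence.
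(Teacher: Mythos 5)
Your route is genuinely different from the paper's. The paper disposes of this statement in a few lines by quoting the sharp isoperimetric inequality for hyperbolic $n$-gons from~\cite[Theorem~1.2(a)]{Ku}, namely $\mathrm{Perimeter}(\Pc_n)^2 \ge 4\,d_n\,\mathrm{Area}(\Pc_n)$ with $d_n = n\tan\big(\tfrac{\mathrm{Area}(\Pc_n)}{2n}\big)$ and with equality exactly for the regular polygon; since the right-hand side depends only on $n$ and the (fixed) area, the regular polygon minimizes the perimeter. Your proposal instead reconstructs the underlying extremal argument from scratch: existence and convexity of a minimizer, reduction to the equilateral case by the isosceles-triangle lemma, and reduction to the regular case by the hyperbolic cyclic-polygon maximality. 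This is the classical symmetrization proof of such polygonal isoperimetric statements, and its outline is sound; what it buys is independence from the reference, at the cost of having to actually prove the two companion lemmas you isolate, which are where all the content lives and which you only sketch (reflection symmetry identifies the symmetric configuration as a critical point, but the second-variation computation you defer is not optional, and you also need the uniqueness half of the max-area statement for your final equality argument).

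One step is wrong as stated: the non-degeneration claim in your existence argument. A side shrinking to zero does \emph{not} force the area below $A_0$; the limit is simply an $(n-1)$-gon of area $A_0$, which is a legitimate competitor in the closure of your configuration space, so compactness alone does not hand you a genuine $n$-gon minimizer. This must be excluded differently --- for instance by induction on $n$ together with the observation that the regular $(n-1)$-gon of area $A_0$ has strictly larger perimeter than the regular $n$-gon of the same area (so the infimum is not attained on the boundary stratum), or by treating the degenerate vertex as an interior angle $\pi$ and showing your local move still improves or relocates it. With that repaired and the two lemmas supplied, the argument goes through; but if the goal is only to establish \Cref{thm isoareal} for use in the paper, the citation of~\cite{Ku} is far more economical.
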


\begin{proof}
For a hyperbolic $n$-gon $\Pc_n$, the inequality 
\[ \mathrm{Perimeter}(\Pc_n)^2 \ge 4\,d_n\,\mathrm{Area}(\Pc_n), \qquad d_n = n \tan\big(\tfrac{\mathrm{Area}(\Pc_n)}{2n}\big), \]
is given in~\cite[Theorem 1.2(a)]{Ku}, which also states that equality is achieved on a regular polygon. Both isoperimetric and isoareal inequalities follow: $\mathrm{Area}(\Pc_n)$ and $n$ are constant, so the right-hand side $4 d_n \mathrm{Area}(\Pc_n)$ is constant and thus the perimeter of $\Fc$ is minimal when $\Fc$ is a regular polygon.
\end{proof}

In our setting, $\Fc = \Pc_n$ with $n=8g-4$ and $\mathrm{Area}(\Fc) = 2\pi(2g-2)$ is constant in $\Tc(S)$, so by \Cref{thm isoareal} the perimeter is minimized when $\Fc$ is regular.
The expression for the maximum value $H(g)$ in \eqref{max H} comes directly from \eqref{main formula}, with \[ \cosh^{-1}\big(1+2\cos\tfrac\pi{4g-2}\big) \] being the length of a single side of the regular $(8g-4)$-gon. This completes the~proof of \Cref{thm max and flexibility}(\ref{item max}).

To prove \Cref{thm max and flexibility}(ii), we recall that
Fen\-chel--Niel\-sen coordinates use a decomposition of $S$ into $2g-2$ pairs of pants by $3g-3$ non-intersecting closed geodesics whose lengths can be manipulated independently (these lengths form $3g-3$ of the $6g-6$ coordinates).
We take one of these geodesics to also be a geodesic from the chain described in \Cref{sec intro} that corresponds to one entire side of $\Fc$ (this shared geodesic is on the far right in both parts of \Cref{fig closed geodesics}). Since the length of this side (one of the Fenchel--Nielsen coordinates) can be made arbitrarily large, the perimeter of $\Fc$ can also be made arbitrarily large, which by~\eqref{main formula} means that $h_{\mu_\P}(f_\P)$ can be made arbitrarily small.

\begin{figure}[htb]
    \includegraphics[width=0.875\textwidth]{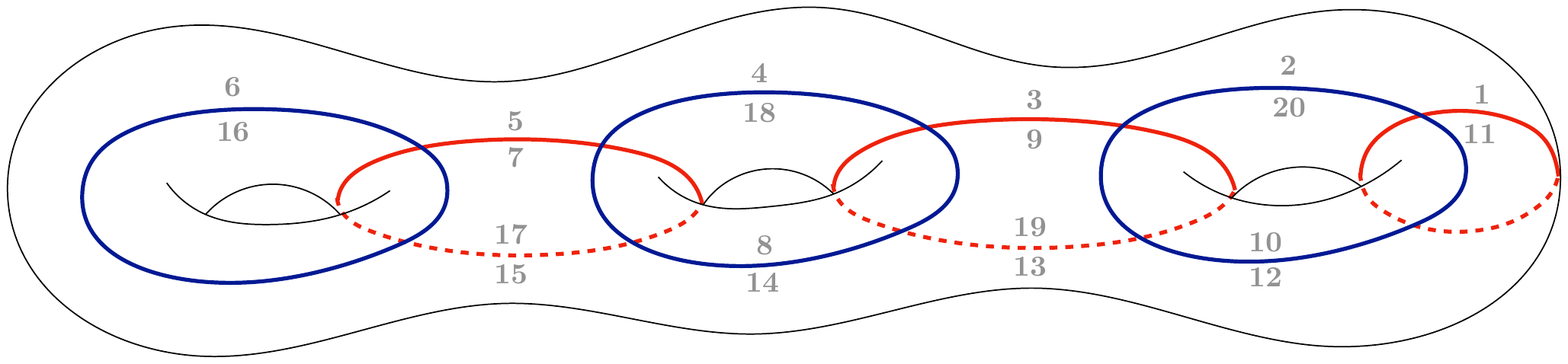} \\[0.25em]
    \includegraphics[width=0.875\textwidth]{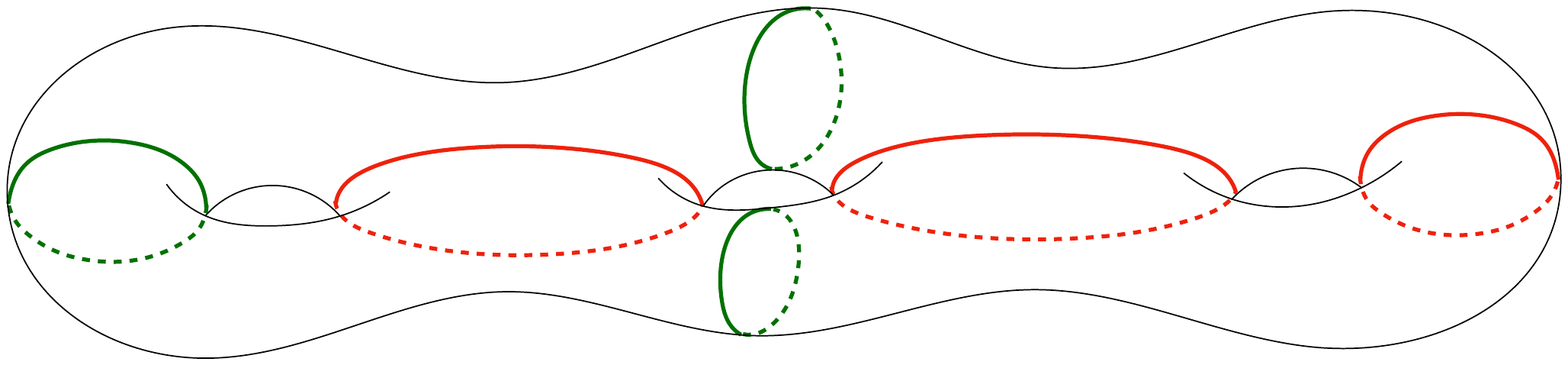}
    \caption{Chain of $2g$ geodesics on $S$ forming the sides of $\Fc$ (top) and decomposition of $S$ into $2g-2$ pairs of pants by $3g-3$ non-intersecting geodesics (bottom) for $g=3$\captionperiod}
    \label{fig closed geodesics}
\end{figure}

Using the continuity of the Fenchel--Nielsen coordinates, if $\G\to \G'$  in $\Tc(S)$, then, by the Fenchel--Nielsen Theorem, $\G=h\circ\G'\circ h^{-1}$ for some orientation preserving homeomorphism $h:\bar\D\to\bar\D$, and
$h|_\Sb\to\Id$ as circle homeomorphisms, i.e., $d(h(x),x)\to 0$ for all $x\in\Sb$. Therefore, for the endpoints of the geodesics $P_iQ_{i+1}$ containing the sides of the fundamental polygon $\Fc$ and the geodesics $P'_iQ'_{i+1}$ containing the sides of the fundamental polygon $\Fc'$, we have $P_i\to P'_i$ and $Q_{i+1}\to Q'_{i+1}$. It follows that the vertices of $\Fc$ will tend to the vertices of $\Fc'$, and hence $\mathrm{Perimeter}(\Fc)\to\mathrm{Perimeter}(\Fc')$, i.e., the perimeter of $\Fc$ varies continuously 
within the Teichm\"uller space~$\Tc(S)$. 
From \Cref{thm main formula} we conclude the continuity of the entropy  $h_{\mu_\P}(f_\P)$ 
within~$\Tc(S)$. By the Intermediate Value Theorem, $h_{\mu_\P}(f_\P)$ must take on all values between $0$ and its maximum.

\medskip
For genus $2$, the techniques of Maskit (see \Cref{sec Maskit}) allow us to accurately draw the fundamental polygon $\Fc$ for any values of the Fenchel--Nielsen coordinates. \Cref{fig entropy vs side length} shows how the entropy changes as the single Fenchel--Nielsen coordinate representing the length of the bottom side of $\Fc$ is varied.

\begin{figure}[hbt]
    \includegraphics[width=0.85\textwidth]{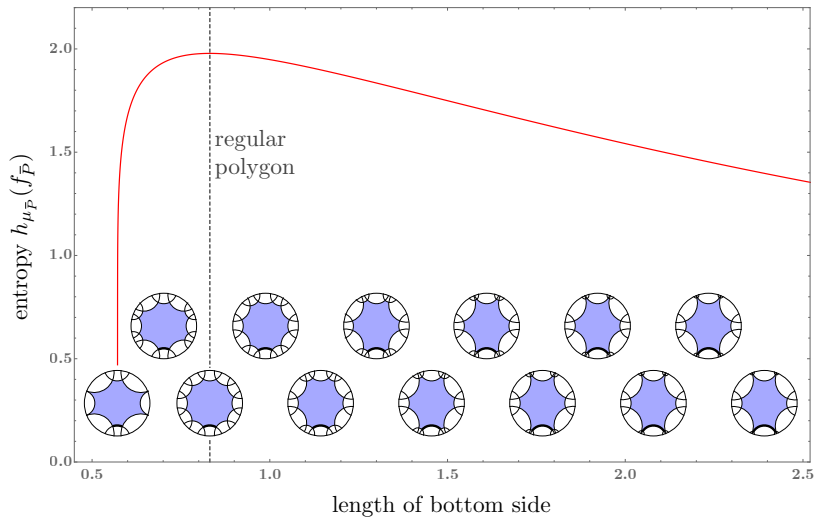}
    \caption{Entropy as a function of a single Fenchel--Nielsen coordinate for $g=2$\captionperiod}
    \label{fig entropy vs side length}
\end{figure}

\section{Topological entropy} \label{sec h top}

The notion of topological entropy was originally introduced for continuous maps acting on compact metric spaces. As explained in~\cite{MZ}, Bowen's definition can also be applied to piecewise continuous, piecewise monotone maps on an interval. The theory naturally extends to maps of the circle, where monotonicity is understood to mean local monotonicity.

The map $f_\P$ is Markov with respect to the partition $\{I_1,\dots,I_{16g-8}\}$ given by
\begin{equation} \label{MarkovPartition}
I_{2i-1} := [P_i, Q_i), \qquad I_{2i} := [Q_{i}, P_{i+1}), \qquad i=1,\dots,8g-4
\end{equation}
(see~\cite[Lemma 2.5]{BS79} or~\cite[Theorem 6.1]{AF91}). The associated transition matrix $M$ has entries 
\[ m_{ij} = \left\{\begin{array}{ll} 1 &\text{if } f_\P(I_i) \supset I_j \\ 0 &\text{otherwise,} \end{array}\right. \]
and the topological entropy of $f_\P$ is 
\[ h_\mathrm{top}(f_\P) = \log\abs{\lambda_\mathrm{max}}, \]
where $\abs{\lambda_\mathrm{max}}$ is the spectral radius (that is, the eigenvalue with largest absolute value) of $M$ (see, e.g., \cite[Proposition~3.2.5]{KH}). 

In some situations the Lebesgue measure $\mu$ will satisfy $h_\mu(f) = h_\mathrm{top}(f)$, but the boundary map $f_\P: \Sb \to \Sb$ provides an example where the smooth invariant measure~$\mu_\P$ is not a measure of maximum entropy (since $f_\P$ is Markov, the measure of maximal entropy is the Parry measure).

It is a direct calculation that $\lambda = 4g-3 + \sqrt{(4g-3)^2-1}$ is an eigenvalue of $M$ with corresponding eigenvector
\[ v = (1,\; \lambda\!-\!1,\; 1,\; \lambda\!-\!1,\; \dots,\; 1,\; \lambda\!-\!1). \]
This shows that the topological entropy  satisfies
\< \label{htop inequality} h_\mathrm{top}(f_\P) \ge \log\big( 4g - 3 + \sqrt{(4g-3)^2 - 1}\big), \> which implies \Cref{thm less} below. 

We should point out that as we move in the Teichm\"uller space~$\Tc(S)$, by the Fenchel-Nielsen Theorem mentioned in the Introduction, the partition (\ref{MarkovPartition}) of $\Sb$ into $16g-8$ intervals  remains Markov with the same transition matrix $M$, therefore $h_\mathrm{top}(f_{\bar P})$ does not change.

\begin{cor} \label{thm less}
    The mea\-sure-theo\-re\-tic entropy of $f_\P$ with respect to its smooth invariant measure~$\mu_\P$ is strictly less than the topological entropy of $f_\P$. 
\end{cor}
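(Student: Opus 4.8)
The plan is to reduce the strict inequality to a single elementary estimate between the two closed-form quantities already established. On one side, \Cref{thm max and flexibility}(\ref{item max}) bounds the measure-theoretic entropy over all of $\Tc(S)$: for every $\Fc$ we have $h_{\mu_\P}(f_\P)\le H(g)$. On the other side, the eigenvalue computation preceding the corollary gives
\[ h_\mathrm{top}(f_\P)\ge \log\bigl(4g-3+\sqrt{(4g-3)^2-1}\bigr)=\cosh^{-1}(4g-3), \]
using $\cosh^{-1}(x)=\log\bigl(x+\sqrt{x^2-1}\bigr)$. Since this lower bound comes from the fixed transition matrix $M$ and so holds for each surface in $\Tc(S)$, it suffices to prove the single $g$-indexed inequality $H(g)<\cosh^{-1}(4g-3)$ for every $g\ge2$; chaining the three bounds then yields $h_{\mu_\P}(f_\P)\le H(g)<\cosh^{-1}(4g-3)\le h_\mathrm{top}(f_\P)$, which is the desired strict inequality.

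To prove the reduced inequality I would first bound $H(g)$ uniformly from above. Writing $L(g)=\cosh^{-1}\bigl(1+2\cos\tfrac{\pi}{4g-2}\bigr)$ for the length of a side of the regular $(8g-4)$-gon, the estimate $\tfrac{\pi}{4g-2}\le\tfrac\pi6$ for $g\ge2$ gives $\cos\tfrac{\pi}{4g-2}\ge\tfrac{\sqrt3}2$, hence $L(g)\ge\cosh^{-1}(1+\sqrt3)$. Together with $\tfrac{4g-4}{8g-4}<\tfrac12$ this yields
\[ H(g)=\frac{\pi^2(4g-4)}{(8g-4)\,L(g)}<\frac{\pi^2}{2\cosh^{-1}(1+\sqrt3)}<3. \]
On the other hand $\cosh^{-1}(4g-3)$ is increasing in $g$ and exceeds $3$ precisely when $4g-3>\cosh 3\approx10.07$, i.e.\ for all $g\ge4$ (where $4g-3\ge13$). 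Thus the reduced inequality holds for every $g\ge4$ from the uniform bound alone, and the two remaining cases $g=2,3$ I would dispatch by direct numerical evaluation of the explicit constants: $H(2)\approx1.98<\cosh^{-1}5\approx2.29$ and $H(3)\approx2.29<\cosh^{-1}9\approx2.89$.

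The main obstacle is exactly this analytic inequality holding uniformly in $g$: the gap is tightest for small genus, where the crude bound $H(g)<3$ is not yet beaten by $\cosh^{-1}(4g-3)$, so a clean split into the two small cases plus a uniform large-$g$ tail is needed rather than a single monotonicity argument. Conceptually the mechanism is transparent, which reassures that no surprises lurk: $H(g)$ stays bounded (its limit is $\tfrac{\pi^2}{2\cosh^{-1}3}\approx2.80$) because the perimeter grows linearly in the number of sides, whereas $\cosh^{-1}(4g-3)$ grows like $\log(8g)\to\infty$, so for large $g$ the topological entropy dominates the measure-theoretic entropy by an arbitrarily wide margin.
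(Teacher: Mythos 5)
Your proposal is correct and follows essentially the same route as the paper: bound $h_{\mu_\P}(f_\P)$ above by $H(g)$, bound $h_\mathrm{top}(f_\P)$ below via the explicit eigenvalue of the fixed transition matrix $M$, show the former stays below a uniform constant while the latter grows, and check the small genera numerically. The only (harmless) differences are cosmetic: you split off $g=2,3$ instead of just $g=2$, and you obtain the uniform bound $H(g)<3$ by separately bounding $\tfrac{4g-4}{8g-4}<\tfrac12$ and $L(g)\ge\cosh^{-1}(1+\sqrt3)$ rather than by citing monotonicity of $H(g)$ and its limit $\tfrac{\pi^2}{2\cosh^{-1}3}$, which if anything is slightly more self-contained.
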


\begin{proof}
From \eqref{max H}, we have that $H(g)=h_{\mu_\P\reg}(f_\P\reg)$, computed in \Cref{thm max and flexibility}(i), is an increasing function of~$g$, and we can calculate
\[
    \lim_{g\to\infty}H(g)  
    = \lim_{g\to\infty} \frac{\pi^2(4g-4)}{(8g\!-\!4)\cosh^{-1}(1\!+\!2\cos\tfrac\pi{4g-2})}
    = \frac{\pi^2}{2\cosh^{-1}(3)}
.\]
Since $H(g)$ is increasing, its value for any $g$ is less than or equal to this limit. The function
\[ \log\big( 4g - 3 + \sqrt{(4g-3)^2 - 1}\big) \]
is also increasing, so, by \eqref{htop inequality}, for any $g \ge 3$ we have
\[ 
     h_\mathrm{top}(f_\P)
     \ge \log(9 + 4\sqrt{5})\approx 2.8872
\]
and, therefore,
\[
    h_{\mu_\P}(f_\P) \;\le \; \frac{\pi^2}{2\cosh^{-1}(3)} \;<\;
    2.8 \;<\; 
    \log(9 + 4\sqrt{5}) \;\le\;
    h_\mathrm{top}(f_\P).
\]
The case $g=2$ is checked separately:
\[ h_{\mu_\P}(f_\P)=\frac{\pi^2}{3\cosh^{-1}(1+\sqrt{3})}\approx 1.9784 
,\quad h_\mathrm{top}(f_\P) \ge \log(5+2\sqrt6)\approx 2.2924. \]
This completes the proof of the \namecref{thm less}.
\end{proof}

\begin{remark} In an upcoming paper~\cite{AKUpre-print} we prove that $4g-3 + \sqrt{(4g-3)^2-1}$ is the maximal eigenvalue of $M$, thus making \eqref{htop inequality} an equality and obtaining the exact formula for $h_\mathrm{top}(f_\P)$. For \Cref{thm less}, however, the inequality is sufficient.
\end{remark}

\appendix
\section{Appendix. Computational tools for genus 2} \label{sec Maskit}

The polygon in \Cref{fig irregular sides} and the details of \Cref{fig bulges and corners} were produced using the generators of $\G$ in terms of the Fenchel--Nielsen coordinates $(\alpha, \beta, \gamma, \sigma, \tau, \rho)$ for $g=2$ introduced by Maskit~\cite{M99}. In case they will be useful for others, we provide below the relevant information for doing numerical experiments in $\Tc(S)$ for genus $2$.

\providecommand\Matrix[1]{\begin{pmatrix}#1\end{pmatrix}}
Maskit uses the six parameters above along with
\begin{align*}
	\mu &= \cosh^{-1}\!\big( \coth\beta\cosh\sigma\cosh\tau + \sinh\sigma\sinh\tau \big) \\[0.6em]
	\delta &= \coth^{-1}\!\Big(\frac{\cosh\gamma\cosh\mu-\coth\alpha\sinh\gamma\sinh\mu-\sinh\sigma\sinh\rho}{\cosh\sigma\cosh\rho}\Big)
\end{align*}
to define matrices $\tilde A$, $\tilde B$, $\tilde C$, $\tilde D$ acting on the half-plane.
Setting $A = \frac12 (\begin{smallmatrix}i & 1 \\ 1 & i\end{smallmatrix}) \tilde A (\begin{smallmatrix}-i & 1 \\ 1 & -i\end{smallmatrix})$, etc., we get the following matrices acting on the disk:
\begin{align*}
    A &= \frac{\sinh\alpha}{\sinh\mu} \Matrix{
   		\coth \alpha \sinh \mu + i & -i \cosh \mu \\
		i \cosh \mu & \coth \alpha \sinh \mu - i
	} \\[0.6em]
	B &= \frac{\sinh\beta}{\cosh\tau} \Matrix{
   		\cosh \tau \coth \beta + i \sinh \sigma & \cosh \sigma + i \sinh \tau \\
		\cosh \sigma - i \sinh \tau & \cosh \tau \coth \beta - i \sinh \sigma
	} \\[0.6em]
	C &= \Matrix{
   		\cosh \gamma & i \sinh \gamma \\
		-i \sinh \gamma & \cosh \gamma
	} \\[0.6em]
	D &= \frac{\sinh\delta}{\cosh\rho} \Matrix{
   		\cosh \rho \coth \delta - i \sinh (\gamma +\sigma ) & -\cosh (\gamma +\sigma ) - i \sinh \rho \\
		-\cosh (\gamma +\sigma ) + i \sinh \rho & \cosh \rho \coth \delta + i \sinh (\gamma +\sigma )
	}.
\end{align*}

Let $S_i$ be the transformation for which $P_i$ is the repelling fixed point and $Q_{i+1}$ is the attracting fixed point. That is, the oriented axis of $S_i$ contains side $i$. 
We have
\begin{align*}
	S_1 &= C^{-1}D^{-1}C &	
	S_2 &= AC &
	S_3 &= ABDA^{-1} &
	S_4 &= A^{-1} \\*
	S_5 &= D^{-1}B^{-1} & 
	S_6 &= CA &
	S_7 &= D &
	S_8 &= DA^{-1}C^{-1}D^{-1} \\*
	S_9 &= B^{-1}D^{-1} &
	S_{10} &= B^{-1}AB &
	S_{11} &= C^{-1}DCB &
	S_{12} &= C^{-1}B^{-1}A^{-1}B.
\end{align*}
The side-pairing transformations
~are
\begin{align*}
	T_1 &= C & 
	T_2 &= C^{-1}DC & 
	T_3 &= A^{-1} &
	T_4 &= B^{-1} \\*
	T_5 &= A &   
	T_6 &= D &
	T_7 &= C^{-1} &
	T_8 &= D^{-1} \\*
	T_9 &= B^{-1}AB &
	T_{10} &= B &
	T_{11} &= B^{-1}A^{-1}B &
	T_{12} &= C^{-1}D^{-1}C,
\end{align*}
and the defining relation 
\[ ABDA^{-1}C^{-1}D^{-1}CB^{-1} = \mathrm{Id} \]
from~\cite{M99} is equivalent to 
\cite[Equation~1.5]{KU17} with $g=2$. 
The regular $12$-gon corresponds to values \[ \alpha = \tfrac12 \operatorname{arccosh}(1+\sqrt3), \quad \beta = \gamma = 2\alpha, \quad \sigma = \tau = \rho = 0 \] for Maskit's Fenchel--Nielsen coordinates.

\end{document}